\documentclass[10pt,reqno]{amsart}
\usepackage{bbm}
\usepackage{mathrsfs}
\usepackage{amsfonts} 
\usepackage[dvipsnames,usenames]{color}
\textwidth=13.5cm 
\baselineskip=17pt 
\usepackage{graphicx,latexsym,bm,amsmath,amssymb,verbatim,multicol,lscape}
\vfuzz2pt 
\hfuzz2pt 
\newtheorem{thm}{Theorem} [section]

\newtheorem{lem}[thm]{Lemma}

\theoremstyle{definition}

\theoremstyle{remark}

\numberwithin{equation}{section}

\begin{document}
\title{A generalization of a theorem of Nagell}
\author[Y.L. Feng, S.F. Hong, X. Jiang and Q.Y. Yin]
{Yulu Feng, Shaofang Hong$^{*, \dagger}$, Xiao Jiang and Qiuyu Yin\\
\small{Mathematical College, Sichuan University, Chengdu 610064, P.R. China\\
e-mails: yulufeng17@126.com (Y.L. Feng); sfhong@scu.edu.cn, s-f.hong@tom.com,
hongsf02@yahoo.com (S.F. Hong); 422040631@qq.com (X. Jiang); 
yinqiuyu26@126.com (Q.Y. Yin)}}
\thanks{$^*$Corresponding author}
\thanks{$^{\dagger}$The research was supported partially by National Science
Foundation of China Grant \#11771304
and by the Fundamental Research Funds for the Central Universities.}
\subjclass[2000]{primary 11N13, 11B25, 11B83, 11B75}
\keywords{$p$-adic valuation, arithmetic progression, reciprocal
power sum, Bertrand's postulate, integrality}
\begin{abstract}
Let $n$ be a positive integer. In 1915, Theisinger proved that if
$n\ge 2$, then the $n$-th harmonic sum $\sum_{k=1}^n\frac{1}{k}$
is not an integer. Let $a$ and $b$ be positive integers. In 1923,
Nagell extended Theisinger's theorem by showing that the reciprocal
sum $\sum_{k=1}^{n}\frac{1}{a+(k-1)b}$ is not an integer if $n\ge 2$.
In 1946, Erd\H{o}s and Niven proved a theorem of a similar nature that
states that there is only a finite number of integers $n$ for
which one or more of the elementary symmetric functions of
$1,1/2, ..., 1/n$ is an integer. In this paper, we present
a generalization of Nagell's theorem. In fact, we show that
for arbitrary $n$ positive integers $s_1, ..., s_n$
(not necessarily distinct and not necessarily monotonic),
the following reciprocal power sum
$$\sum\limits_{k=1}^{n}\frac{1}{(a+(k-1)b)^{s_{k}}}$$
is never an integer if $n\ge 2$. The proof of our
result is analytic and $p$-adic in character.
\end{abstract}
\maketitle

\section{Introduction}
Let $\mathbb{Z}$, $\mathbb{Z}^+$ and $\mathbb{Q}$ be the set
of integers, the set of positive integers and the set of
rational numbers, respectively. Let $n\in \mathbb{Z}^+$.
More than one hundred years ago, Theisinger \cite{[T]} proved
that the $n$-th harmonic sum $1+\frac{1}{2}+...+\frac{1}{n}$
is never an integer if $n>1$. Eight years later, Nagell
\cite{[N]} extended Theisinger's theorem from the sequence
of positive integers to general arithmetic progressions
by showing that if $a$ and $b$ are positive integers and $n\ge 2$,
then the reciprocal sum $\sum_{i=0}^{n-1}\frac{1}{a+bi}$
is never an integer. Then Erd\H{o}s and Niven \cite{[EN]}
generalized Nagell's theorem by establishing a similar result
on the integrality of the elementary symmetric functions
of $\frac{1}{a}, \frac{1}{a+b}, ..., \frac{1}{a+(n-1)b}$.
In the recent years, Erd\H{o}s and Niven's result was
extended to arbitrary polynomial sequences, see \cite{[CT]},
\cite{[HW]}, \cite{[LHQW]}, \cite{[WH]} and \cite{[YHYQ]}.

Throughout, we let $a$ and $b$ be positive integers. By
$(\mathbb{Z}^+)^{\infty}$ we denote the set of all infinite
sequences $\{s_i\}_{i=1}^{\infty}$ of positive integers
(note that all the $s_i$ are not necessarily distinct
and not necessarily monotonic). For any given
$\mathcal{S}=\{s_i\}_{i=1}^{\infty}\in
(\mathbb{Z}^+)^{\infty}$, we let
$\mathcal{S}_n:=\{s_1, ..., s_n\}.$
Associated to the infinite sequence $\mathcal{S}$
of positive integers, one can form an infinite sequence
$\{H_{a, b}(\mathcal{S}_n)\}_{n=1}^{\infty}$ of positive
rational fractions with $H_{a, b}(\mathcal{S}_n)$
being the $n$-th reciprocal power sum defined as follows:
$$H_{a, b}(\mathcal{S}_n)
:=\sum\limits_{k=1}^{n}\frac{1}{(a+(k-1)b)^{s_k}}.$$
If $s_i=1$ for all integers $i\ge 1$, then we write
$H_{a, b}(n)$ for $H_{a, b}(\mathcal{S}_n)$.
By Nagell's theorem \cite{[N]} and the main result
of \cite{[LHQW]}, we know that if $n\ge 2$ and $s_1=...=s_n$,
then $H_{a, b}(\mathcal{S}_n)$ is never an integer. In 2017,
Yang, Li, Feng and Jiang \cite{[YLFJ]} showed an extension of
Theisinger's theorem that states that
$H_{1, 1}(\mathcal{S}_n)=1+\frac{1}{2^{s_2}}+...+\frac{1}{n^{s_n}}$
is never an integer if $n\ge 2$.

In this paper, we address the problem of integrality of the
$n$-th reciprocal power sum $H_{a, b}(\mathcal{S}_n)$. In fact,
we present the following generalization of Nagell's theorem
\cite{[N]}.

\begin{thm}\label{thm1}
For any infinite sequence $\mathcal{S}$ of positive integers
and arbitrary positive integers $a, b$, the $n$-th reciprocal
power sum $H_{a, b}(\mathcal{S}_n)$ is never an integer if $n\ge 2$.
\end{thm}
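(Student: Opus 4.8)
The plan is to show that, for each triple $(a,b,n)$ with $n\ge 2$, there is a single prime $p$ at which the $p$-adic valuation $v_p\big(H_{a,b}(\mathcal{S}_n)\big)$ is strictly negative; since every rational integer has nonnegative valuation at every prime, this forces $H_{a,b}(\mathcal{S}_n)\notin\mathbb{Z}$. Writing $t_k=a+(k-1)b$ and $x_k=t_k^{-s_k}$, so that $v_p(x_k)=-s_k\,v_p(t_k)$, I would first isolate the following clean sufficient condition as a lemma: if some prime $p$ divides exactly one of $t_1,\dots,t_n$, say $t_{k_0}$, then $v_p(x_{k_0})=-s_{k_0}v_p(t_{k_0})\le -1$ while $v_p(x_k)=0$ for all $k\ne k_0$, so the minimal valuation is attained uniquely and $v_p\big(\sum_k x_k\big)=-s_{k_0}v_p(t_{k_0})<0$ by the ultrametric inequality. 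The decisive feature is that this conclusion is completely insensitive to the sizes of the exponents: once a prime divides a unique term, that term dominates the valuation no matter how the $s_k$ are chosen. This is precisely where the varying-exponent problem departs from the constant-exponent case of \cite{[LHQW]}, in which it suffices to locate a prime whose \emph{maximal} power occurs in a single term.

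Second, I would reduce the search for such a prime to a statement about large prime factors. If $p\ge n$ and $p\nmid b$, then $p$ divides at most one term, since two indices $k\ne k'$ with $p\mid t_k,t_{k'}$ would give $p\mid (k-k')b$, hence $p\mid k-k'$, impossible for $0<\abs{k-k'}<n$. Thus it suffices to produce a prime $p\ge n$, coprime to $b$, dividing the product $\prod_{k=1}^n(a+(k-1)b)$: such a $p$ divides exactly one term. To invoke the available tools I would pass to the reduced progression, writing $d=\gcd(a,b)$ and $t_k=d\,u_k$ with $u_k=\alpha+(k-1)\beta$, $\gcd(\alpha,\beta)=1$, and seek a large prime factor of $\prod_k u_k$. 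Here I would apply a Sylvester–Erd\H{o}s type statement on the greatest prime factor of a product of terms of an arithmetic progression, with the required numerical ranges supplied by Bertrand's postulate; for the base sequence $a=b=1$ this is just the classical remark that a prime in $(n/2,n]$ divides exactly one of $1,\dots,n$.

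Third, the configurations in which no such prime exists must be dispatched directly. The absence of a prime $\ge n$ coprime to $b$ dividing some term means every term is built only from primes dividing $b$ or smaller than $n$, a strong smoothness constraint that, together with Bertrand-type counting, confines $(a,b,n)$ to a short explicit list of small or degenerate progressions. On these I would argue by estimation: when $t_k\ge 2$ for every $k$ one has $0<H_{a,b}(\mathcal{S}_n)\le\sum_{k=1}^n t_k^{-1}<1$ once the progression grows quickly enough, and in particular $n=2$ is always settled this way, since $a^{-s_1}+(a+b)^{-s_2}<1$ for $a\ge 2$, while for $a=1$ one has $H_{1,b}(\mathcal{S}_2)=1+(1+b)^{-s_2}\in(1,2)$; the remaining finitely many cases are checked by hand.

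The hard part will be the second step: guaranteeing, uniformly in $a$ and $b$, a prime $p\ge n$ that divides a single term while avoiding every prime factor of $d=\gcd(a,b)$. The danger is that the large prime factor of $\prod_k u_k$ produced by the Sylvester–Erd\H{o}s input may itself divide $d$, in which case it divides \emph{all} of the $t_k=d\,u_k$ and is useless; and $d$ can carry arbitrarily large primes. Controlling the product's prime factorization and the common factor $d$ simultaneously, and pinning down exactly which small progressions escape the prime-existence input, is where the real work—and the analytic estimates underlying Bertrand's postulate—will be concentrated.
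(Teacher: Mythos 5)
There is a genuine gap, and it sits exactly where you placed your bet. Your plan stands or falls on producing, outside the regime where $H_{a,b}(\mathcal{S}_n)<1$, a prime $p$ that divides \emph{exactly one} term of $\{a+(k-1)b\}_{k=1}^n$, and you defer this to a Sylvester--Erd\H{o}s type theorem on the greatest prime factor of $\prod_k\big(a+(k-1)b\big)$, together with an admittedly unresolved worry about that prime dividing $d=\gcd(a,b)$. This is precisely the point where the paper does something different, and for a reason: using only Bertrand's postulate (the paper's sole analytic input) one cannot guarantee an exactly-one-term prime, only a prime $p\in(\max(\frac n2,\alpha),n]$, coprime to $b$, which divides \emph{one or two} terms, namely $a+k_2b$ and possibly $a+k_2b+pb$. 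The heart of the paper (Cases 3 and 4 of the proof, via Lemmas 2.4--2.7) is the treatment of the two-term configuration, and that treatment is exactly the new idea required by varying exponents: if $s_{k_2+1}\neq s_{k_2+p+1}$, the two candidate valuations $-s_{k_2+1}$ and $-s_{k_2+p+1}$ differ, so the ultrametric minimum is attained uniquely and $v_p<0$; if both equal $\bar s$, then since $p>\alpha\ge 3b$ forces $a+k_2b<\frac13p^2$ and $a+k_2b+pb<\frac23p^2$, the numerator of the combined fraction satisfies
$$(a+k_2b)^{\bar s}+(a+k_2b+pb)^{\bar s}<\Big(\tfrac13p^2\Big)^{\bar s}+\Big(\tfrac23p^2\Big)^{\bar s}\le p^{2\bar s},$$
so it cannot be divisible by $p^{2\bar s}$, while the denominator has valuation exactly $2\bar s$; hence $v_p<0$ again. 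Your proposal contains no mechanism whatsoever for a prime dividing two terms with equal exponents, which is the crux of the whole generalization.

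Your fallback (step 3) does not repair this. First, the claim that failure of step 2 forces every term to be built from primes dividing $b$ or primes $<n$, and that this is settled by ``Bertrand-type counting,'' conflates two very different things: Bertrand's postulate produces a prime in $(\frac n2,n]$, which may well divide two terms and says nothing about primes $\ge n$ dividing the product; the smoothness statement you actually need is a theorem at the level of Sylvester--Erd\H{o}s and Shorey--Tijdeman, a far heavier input than anything in the paper, with its own exceptional cases that you would then have to classify. Second, as you yourself note, even granting such a theorem, the large prime factor it produces may divide $d$ and hence every term, after which no valuation conclusion follows; you flag this as ``where the real work will be concentrated'' but do not do the work. (This sub-case is in fact fixable: if some prime $\ge n$ divides $d$, then $d\ge n$, and since $a+(k-1)b\ge dk$ one gets $H_{a,b}(\mathcal{S}_n)\le\frac1d\sum_{k=1}^n\frac1k<1$; but this fix, the choice of external theorem, and its exceptions all remain to be written out.) Finally, the estimation regime itself is not the afterthought your sketch suggests: the paper needs Lemmas 2.1--2.6 (integral comparisons, explicit evaluations such as $H_{2,9}(53)<1$, and case-by-case prime selection for the nine exceptional pairs in $R$) to make the ``small cases'' rigorous. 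As it stands, your proposal proves the $n=2$ case and the exactly-one-term lemma (which agrees with the paper's Lemma 2.8), but the path from there to the full theorem is not closed, and the missing idea is precisely the paper's two-term valuation argument.
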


Letting $s_1=...=s_n=1$ in Theorem 1.1 gives us Nagell's theorem
\cite{[N]}, and picking $a=b=1$ in Theorem 1.1 yields the result
of Yang, Li, Feng and Jiang \cite{[YLFJ]}.
The proof of Theorem 1.1 is analytic and $p$-adic in character.

This paper is organized as follows. First of all, in Section 2,
we show some preliminary lemmas which are needed in the proof of
Theorem \ref{thm1}. Then in Section 3, we give the proof of
Theorem \ref{thm1}.

Throughout, we always let $a, b$ and $n$ be positive integers with
$n\ge 2$. As usual, for any prime $p$ and for any integer $m$,
we let $v_p(m)$ stand for the {\it $p$-adic valuation}
of $m$, i.e., $v_p(m)$ is the biggest nonnegative integer
$r$ with $p^r$ dividing $m$. If $x=\frac{m_1}{m_2}$,
where $m_1$ and $m_2$ are integers and $m_2\ne 0$,
then we define $v_p(x):=v_p(m_1)-v_p(m_2)$.

\section{Auxiliary lemmas}

In this section, we present several auxiliary lemmas
that are needed in the proof of Theorem {\ref{thm1}}.
Clearly, $H_{a, b}(\mathcal{S}_n)>0$.

\begin{lem}\label{lem2.1}
Let $a=b>\frac{n}{2}$. Then $H_{a, b}(\mathcal{S}_n)<1$.
\end{lem}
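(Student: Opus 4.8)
The plan is to reduce the claim to a bound on the ordinary harmonic sum. Since $a=b$, the $k$-th denominator collapses to $a+(k-1)a=ka$, so that $H_{a,b}(\mathcal S_n)=\sum_{k=1}^{n}\frac{1}{(ka)^{s_k}}$. As every $s_k\ge 1$ and $ka\ge 1$, we have $(ka)^{s_k}\ge ka$ and hence $\frac{1}{(ka)^{s_k}}\le\frac{1}{ka}$ for each $k$. Summing over $k$ gives $H_{a,b}(\mathcal S_n)\le\frac1a\sum_{k=1}^{n}\frac1k=\frac{h_n}{a}$, where I write $h_n:=\sum_{k=1}^{n}\frac1k$. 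It therefore suffices to prove the purely numerical inequality $h_n<a$.

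To control $h_n$ I would first record that $h_n\le\frac{n+1}{2}$ for every $n\ge1$, proved by a one-line induction: the case $n=1$ is an equality, and if $h_n\le\frac{n+1}{2}$ then $h_{n+1}=h_n+\frac{1}{n+1}\le\frac{n+1}{2}+\frac12=\frac{n+2}{2}$, because $\frac{1}{n+1}\le\frac12$. The last step is strict once $n\ge2$ (then $\frac1{n+1}<\frac12$), so in fact $h_n<\frac{n+1}{2}$ for all $n\ge3$, with equality holding only at $n=1,2$.

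It remains to combine this with the hypothesis $a>\frac n2$. Since $a$ is an integer, $a>\frac n2$ is equivalent to $2a\ge n+1$. If $2a\ge n+2$, then $a\ge\frac{n+2}{2}>\frac{n+1}{2}\ge h_n$, so $h_n<a$. If instead $2a=n+1$, then $n+1$ is even, so $n$ is odd; as $n\ge2$ this forces $n\ge3$, where the harmonic bound is strict, and thus $h_n<\frac{n+1}{2}=a$. Either way $h_n<a$, and therefore $H_{a,b}(\mathcal S_n)\le\frac{h_n}{a}<1$, as desired. The only subtle point is the strictness: because the bound $h_n\le\frac{n+1}{2}$ is attained at $n=2$, one cannot avoid using both the integrality of $a$ (to upgrade $a>\frac n2$ to $2a\ge n+1$) and the parity split above; this is the main, if modest, obstacle.
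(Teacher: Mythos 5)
Your proof is correct, and it closes the bound by a different key estimate than the paper. Both arguments begin with the same reduction: since $a=b$, the terms collapse to $\frac{1}{(ka)^{s_k}}\le\frac{1}{ka}$, so everything hinges on showing $\frac{1}{a}\sum_{k=1}^n\frac1k<1$ under $a>\frac n2$. The paper gets this by splitting into $n\in\{2,3\}$, $n=4$, and $n\ge5$: in the small cases it uses the integrality jump ($a\ge2$, resp.\ $a\ge3$) and evaluates the sums explicitly ($\frac{11}{12}$, $\frac{25}{36}$), and for $n\ge5$ it proves that $\frac1m\sum_{k=1}^m\frac1k$ is decreasing in $m$, so that $\frac2n\sum_{k=1}^n\frac1k\le\frac25\sum_{k=1}^5\frac1k=\frac{137}{150}<1$. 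You instead prove the uniform bound $\sum_{k=1}^n\frac1k\le\frac{n+1}{2}$ by induction, note it is strict for $n\ge3$, and then use integrality of $a$ (so $2a\ge n+1$) together with a parity split to handle the boundary case $2a=n+1$, which forces $n$ odd and hence $n\ge3$ where strictness saves you. Your route avoids all numerical computation and case analysis on small $n$, and it isolates exactly where the integrality of $a$ enters; the paper's route, at the cost of explicit evaluations, rests on the monotonicity of $\frac1m\sum_{k=1}^m\frac1k$, a fact that gives stronger (decaying in $n$) bounds than the linear estimate $\frac{n+1}{2}$, though that extra strength is not needed for this lemma. The one delicate point in your argument --- that $h_n\le\frac{n+1}{2}$ is an equality at $n=2$, so strictness must come from parity plus integrality --- is handled correctly.
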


\begin{proof}
First, let $n=2$ or $3$. Then $a\ge 2$ since $a>\frac{n}{2}$.
Hence
$$H_{a, b}(\mathcal{S}_n)\le H_{a, b}(n)
=\sum\limits_{k=1}^{n}\frac{1}{ka}
\le \sum\limits_{k=1}^{3}\frac{1}{2k}=\frac{11}{12}<1$$
as expected.

Consequently, let $n=4$. Then $a\ge 3$. It follows that
$$H_{a, b}(\mathcal{S}_n)\le \sum\limits_{k=1}^{n}\frac{1}{ka}
\le \sum\limits_{k=1}^{4}\frac{1}{3k}=\frac{25}{36}<1$$
as desired.

Now let $n\ge 5$. Noting that for any positive integer $m$,
$$
(m+1)\sum\limits_{k=1}^m\frac{1}{k}
-m\sum\limits_{k=1}^{m+1}\frac{1}{k}
=\sum\limits_{k=1}^m\frac{1}{k}-\frac{m}{m+1}>0
$$
implying that
$$
\frac{1}{m}\sum\limits_{k=1}^m\frac{1}{k}>
\frac{1}{m+1}\sum\limits_{k=1}^{m+1}\frac{1}{k},
$$
one derives that
$$H_{a, b}(\mathcal{S}_n)\le \sum\limits_{k=1}^{n}\frac{1}{ka}
<\frac{2}{n}\sum\limits_{k=1}^{n}\frac{1}{k}
\le \frac{2}{5}\sum\limits_{k=1}^{5}\frac{1}{k}
=\frac{137}{150}<1$$
as required. So Lemma \ref{lem2.1} is proved.
\end{proof}

\begin{lem}\label{lem2.2}
Let $2\le n<\frac{a}{b}+1$. Then $H_{a, b}(\mathcal{S}_n)<1$.
\end{lem}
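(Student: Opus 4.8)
The plan is to reduce immediately to the pure harmonic case and then bound the resulting sum crudely, term by term, since the hypothesis $n<\frac{a}{b}+1$ is tailored precisely to make such a crude estimate succeed. Because $a+(k-1)b\ge 1$ and each exponent $s_k\ge 1$, we have $\frac{1}{(a+(k-1)b)^{s_k}}\le\frac{1}{a+(k-1)b}$, so that $H_{a,b}(\mathcal{S}_n)\le H_{a,b}(n)=\sum_{k=1}^n\frac{1}{a+(k-1)b}$, exactly the reduction already used at the start of the proof of Lemma \ref{lem2.1}. It therefore suffices to prove $H_{a,b}(n)<1$ under the hypothesis, and the particular sequence $\mathcal{S}$ plays no further role.

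Next I would compare every summand of $H_{a,b}(n)$ with its largest term. For $k=1$ the term equals $\frac{1}{a}$, while for $2\le k\le n$ we have $a+(k-1)b>a$ and hence $\frac{1}{a+(k-1)b}<\frac{1}{a}$ strictly. Adding the $n$ terms gives $H_{a,b}(n)<\frac{n}{a}$, where the inequality is strict because $n\ge 2$ guarantees that at least one summand (for instance the last) is strictly smaller than $\frac{1}{a}$.

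Finally I would convert the hypothesis into a bound on $n$. The condition $n<\frac{a}{b}+1$ is equivalent to $(n-1)b<a$; since $b\ge 1$ this forces $n-1\le (n-1)b<a$, whence $n\le a$ and $\frac{n}{a}\le 1$. Chaining the three estimates yields $H_{a,b}(\mathcal{S}_n)\le H_{a,b}(n)<\frac{n}{a}\le 1$, which is the assertion.

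The computation is entirely routine, so there is no genuine analytic obstacle; the only point demanding care is that the final bound $\frac{n}{a}\le 1$ can be an equality, namely precisely when $b=1$ and $n=a$. Consequently the argument must retain the strict inequality $H_{a,b}(n)<\frac{n}{a}$ rather than merely bounding each term non-strictly by $\frac{1}{a}$, and it is the assumption $n\ge 2$ that secures this strictness and closes the proof.
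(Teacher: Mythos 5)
Your proof is correct, and it is genuinely simpler than the one in the paper. Both arguments begin with the same reduction $H_{a,b}(\mathcal{S}_n)\le H_{a,b}(n)$, but from there the paper keeps the first two terms $\frac{1}{a}+\frac{1}{a+b}$, bounds the remaining $n-2$ terms by $\frac{1}{a+2b}$, invokes the hypothesis in the form $n-2<\frac{a}{b}-1$, reduces to $b=1$ by monotonicity, and closes with the algebraic identity $\frac{1}{a}+\frac{1}{a+1}+\frac{a-1}{a+2}=1-\frac{a^2-2a-2}{a(a+1)(a+2)}$, which forces a separate case $a=2$ (where $a^2-2a-2<0$) handled by direct computation. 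You instead bound every term by the first one, strictly for $k\ge 2$, giving $H_{a,b}(n)<\frac{n}{a}$, and convert the hypothesis via integrality: $(n-1)b<a$ with $b\ge 1$ gives $n-1<a$, hence $n\le a$ and $\frac{n}{a}\le 1$. Your two key observations---that the strictness must come from the summands rather than from $\frac{n}{a}\le 1$ (which can be an equality when $b=1$, $n=a$), and that the passage from $n<a+1$ to $n\le a$ uses that $n$ and $a$ are integers---are exactly the points where a careless version of this argument would fail, and you handle both. The payoff is a uniform argument with no case split and no computation; the paper's sharper three-term estimate buys nothing extra for this lemma, since only the conclusion $H_{a,b}(\mathcal{S}_n)<1$ is ever used later.
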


\begin{proof}
First, it follows from the hypothesis that
$2\le n<\frac{a}{b}+1$ that $a>b\ge 1$.

Firs of all, we let $a=2$. Then $b=1$ and $n=2$. So
$$H_{a, b}(\mathcal{S}_n)\le H_{2, 1}(2)=\frac{1}{2}+\frac{1}{3}<1$$
as desired.

Finally, let $a\ge 3$. Then
\begin{align*}
H_{a, b}(\mathcal{S}_n)&\le \sum\limits_{k=1}^{n}\frac{1}{a+(k-1)b} \\
&\le \frac{1}{a}+\frac{1}{a+b}+(n-2)\frac{1}{a+2b} \\
&<\frac{1}{a}+\frac{1}{a+b}+(\frac{a}{b}-1)\frac{1}{a+2b}\\
&\le \frac{1}{a}+\frac{1}{a+1}+(a-1)\frac{1}{a+2}\\
&=1-\frac{a^2-2a-2}{a(a+1)(a+2)}<1
\end{align*}
as expected. Hence Lemma \ref{lem2.2} is proved.
\end{proof}

\begin{lem}\label{lem2.3}
Let $a\ne b$ and $b\in \{\gcd (a, b), 2\gcd (a, b)\}$.
If $2\gcd (a, b)^2>a+(n-1)b$, then $H_{a, b}(\mathcal{S}_n)<1$.
\end{lem}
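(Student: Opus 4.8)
The plan is to reduce everything to the case $s_1=\dots=s_n=1$ and then exploit the common factor $d:=\gcd(a,b)$. Since $a+(k-1)b\ge a\ge 1$ and each $s_k\ge 1$, we have $(a+(k-1)b)^{s_k}\ge a+(k-1)b$, so termwise
$$H_{a,b}(\mathcal{S}_n)\le \sum_{k=1}^{n}\frac{1}{a+(k-1)b}=H_{a,b}(n).$$
Thus it suffices to bound the plain reciprocal sum. Writing $b=td$ with $t:=b/d\in\{1,2\}$ (this is exactly the hypothesis $b\in\{\gcd(a,b),2\gcd(a,b)\}$) and $m:=a/d$, every denominator factors as $a+(k-1)b=d\,u_k$ with $u_k:=m+(k-1)t$, a strictly increasing sequence of positive integers whose smallest term is $u_1=m$. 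Hence $H_{a,b}(n)=\frac1d\sum_{k=1}^n\frac1{u_k}$, and the claim $H_{a,b}(\mathcal{S}_n)<1$ will follow once I show $\sum_{k=1}^n\frac1{u_k}<d$.

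For the latter, I would use that $u_k>m$ for every $k\ge 2$, so for $n\ge 2$ the strict bound $\sum_{k=1}^n\frac1{u_k}<\frac{n}{m}$ holds. It therefore remains to prove $\frac{n}{m}\le d$, i.e. $n\le md=a$. This is where the hypothesis $2d^2>a+(n-1)b$ enters: dividing through by $d$ gives $m+(n-1)t<2d$, the single inequality that controls the size of $n$.

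The final step is a short case check on $t$, which I expect to be the only delicate point. When $t=1$ we have $b=d$, and $a\ne b$ forces $m\ge 2$; then $m+(n-1)<2d$ yields $n<2d-m+1\le 2d-1<2d\le md=a$. When $t=2$ we have $b=2d$ and $m=a/d$ is necessarily odd; the inequality $m+2(n-1)<2d$ gives $n<d-\tfrac{m}{2}+1$, so $n\le d=a$ when $m=1$ and $n<d<a$ when $m\ge 3$. In every case $n\le a$, hence $\sum_{k=1}^n\frac1{u_k}<\frac nm\le d$ and $H_{a,b}(\mathcal{S}_n)\le H_{a,b}(n)<1$. The main obstacle is the borderline configuration $t=2$, $m=1$ (that is, $a=d$, $b=2d$), where the hypothesis only yields $n\le d=a$ rather than a strict inequality; there the argument survives solely because the reciprocal sum is \emph{strictly} smaller than $n/m$ as soon as $n\ge 2$, so carrying the strictness through each estimate is essential.
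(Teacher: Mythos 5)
Your proof is correct, and it follows essentially the same route as the paper: reduce to the case $s_1=\dots=s_n=1$ via $(a+(k-1)b)^{s_k}\ge a+(k-1)b$, factor out $d=\gcd(a,b)$ so that $H_{a,b}(n)=\frac{1}{d}\sum_{k=1}^n\frac{1}{m+(k-1)t}$, and use the hypothesis $2d^2>a+(n-1)b$ to force $d$ large relative to $n$ so that a crude termwise bound on the reciprocal sum gives a value below $1$. The only difference is bookkeeping: you funnel everything through the clean intermediate claim $n\le a$ together with the uniform strict bound $\sum_{k=1}^n\frac{1}{u_k}<\frac{n}{m}$ (correctly tracking strictness in the borderline case $a=d$, $b=2d$), whereas the paper splits on $m=1$ versus $m\ge 2$ and uses slightly sharper termwise estimates; the substance is the same.
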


\begin{proof} Let $d:=\gcd (a, b)$. If $a=d$, then $b=2d$
and $d>\frac{a+(n-1)b}{2d}=\frac{2n-1}{2}$. Hence
\begin{align*}
H_{a, b}(\mathcal{S}_n)
&\le H_{d, 2d}(n) \\
&=\sum\limits_{k=1}^{n}\frac{1}{d+2d(k-1)} \\
&=\frac{1}{d}\sum\limits_{k=1}^{n}\frac{1}{2k-1} \\
&<\frac{2}{2n-1}\Big(1+\frac{n-1}{3}\Big)\\
&=\frac{1}{3}+\frac{5}{6n-3}\le\frac{8}{9}<1
\end{align*}
as desired.

If $a\ge 2d$, then $b\ge d$ and $d>\frac{a+(n-1)b}{2d}
\ge \frac{n+1}{2}$. Therefore
$$H_{a, b}(\mathcal{S}_n)\le \sum\limits_{k=1}^{n}\frac{1}{2d+d(k-1)}
=\frac{1}{d}\sum\limits_{k=1}^{n}\frac{1}{k+1}
<\frac{2}{n+1}\times \frac{n}{2}<1 $$
as expected. So Lemma \ref{lem2.3} is proved.
\end{proof}

\begin{lem}\label{lem2.4}
Let $n$ be an integer such that
$2\le n<1+\frac{a+b}{b}e^{b(1-\frac{1}{a}-\frac{1}{a+b})}-\frac{a}{b}$.
Then $H_{a, b}(\mathcal{S}_n)<1$.
\end{lem}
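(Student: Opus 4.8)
The plan is to discard the exponents by a termwise comparison, reducing to the ordinary reciprocal sum, and then to estimate that sum by an integral whose value the hypothesis on $n$ is built to control. First I would note that since each $s_k\ge 1$ and each denominator satisfies $a+(k-1)b\ge 1$, we have the termwise bound $\frac{1}{(a+(k-1)b)^{s_k}}\le\frac{1}{a+(k-1)b}$, and hence
\[
H_{a,b}(\mathcal{S}_n)\le H_{a,b}(n)=\sum_{k=1}^{n}\frac{1}{a+(k-1)b}.
\]
It therefore suffices to show $H_{a,b}(n)<1$, and the particular sequence $\{s_k\}$ plays no further role.

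Next I would separate the first two summands and bound the tail by an integral. Reindexing, $H_{a,b}(n)=\frac1a+\frac{1}{a+b}+\sum_{j=2}^{n-1}\frac{1}{a+jb}$. Since the function $x\mapsto\frac{1}{a+xb}$ is positive and decreasing, each tail term obeys $\frac{1}{a+jb}\le\int_{j-1}^{j}\frac{dx}{a+xb}$, so that
\[
\sum_{j=2}^{n-1}\frac{1}{a+jb}\le\int_{1}^{n-1}\frac{dx}{a+xb}=\frac1b\ln\frac{a+(n-1)b}{a+b}.
\]
For $n=2$ both the sum and the integral are empty and equal $0$, so the bound persists in that edge case.

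Finally I would invoke the hypothesis. Clearing denominators, the condition $n<1+\frac{a+b}{b}e^{b(1-\frac1a-\frac{1}{a+b})}-\frac{a}{b}$ is equivalent to $a+(n-1)b<(a+b)\,e^{b(1-\frac1a-\frac{1}{a+b})}$; taking logarithms and dividing by $b>0$ gives $\frac1b\ln\frac{a+(n-1)b}{a+b}<1-\frac1a-\frac{1}{a+b}$. Substituting this into the integral estimate yields
\[
H_{a,b}(n)<\frac1a+\frac{1}{a+b}+\Big(1-\frac1a-\frac{1}{a+b}\Big)=1,
\]
whence $H_{a,b}(\mathcal{S}_n)\le H_{a,b}(n)<1$, as desired.

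I expect the only real point to watch is the concluding bookkeeping: the exponent $b(1-\frac1a-\frac{1}{a+b})$ in the hypothesis is tailored precisely so that, after the integral estimate, the two retained terms $\frac1a$ and $\frac{1}{a+b}$ cancel exactly and the bound collapses to $1$. The termwise reduction to $s_k=1$ and the monotone integral comparison are routine; the substance lies in recognizing that the awkward-looking hypothesis on $n$ encodes exactly this cancellation.
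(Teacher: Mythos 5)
Your proof is correct and follows essentially the same route as the paper's: reduce to the case $s_k=1$ by the termwise bound, keep the first two terms, bound the tail by $\int_1^{n-1}\frac{dx}{a+xb}$, and observe that the hypothesis on $n$ is exactly what makes the resulting bound equal to $1$. The only cosmetic difference is that the paper keeps the hypothesis in the form $n-1<c$ and uses monotonicity of the logarithm, whereas you rearrange it into the equivalent inequality $\frac1b\ln\frac{a+(n-1)b}{a+b}<1-\frac1a-\frac1{a+b}$ before substituting.
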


\begin{proof}
Since
$$n-1<\frac{a+b}{b}e^{b(1-\frac{1}{a}-\frac{1}{a+b})}-\frac{a}{b}:=c$$
and
$$\frac{1}{a+(k-1)b}\le \int_{k-2}^{k-1}\frac{1}{a+bt}dt \eqno(1)$$
if $k\ge 3$, one deduces that
\begin{align*}
H_{a, b}(\mathcal{S}_n)
&\le H_{a, b}(n) \\
&=\sum\limits_{k=1}^{n}\frac{1}{a+(k-1)b} \\
&\le \frac{1}{a}+\frac{1}{a+b}+\int_{1}^{n-1}\frac{1}{a+bt}dt \\
&=\frac{1}{a}+\frac{1}{a+b}+\frac{1}{b}\log\frac{a+(n-1)b}{a+b} \\
&<\frac{1}{a}+\frac{1}{a+b}+\frac{1}{b}\log\frac{a+cb}{a+b}=1
\end{align*}
as one expects. Lemma \ref{lem2.4} is proved.
\end{proof}

\begin{lem}\label{lem2.5}
Let $a>b\ge 3$ and $(a, b)\ne (4, 3)$. Then
$$\frac{a+b}{2b}e^{b(1-\frac{1}{a}-\frac{1}{a+b})}-\frac{a}{2b}
+\frac{1}{2}>\frac{3b+\sqrt{9b^2+12(a-b)}}{2}. $$
\end{lem}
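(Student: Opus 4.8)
The plan is to treat both sides as functions of the real variable $a\ge b+1$ with $b\ge 3$ fixed, and to reduce the inequality to the least admissible value of $a$ by a monotonicity argument. Write $\phi(a)$ for the left-hand side and $\psi(a)$ for the right-hand side, and set $E=E(a)=b\bigl(1-\frac1a-\frac1{a+b}\bigr)$, so that $\phi(a)=\frac{a+b}{2b}e^{E}-\frac{a}{2b}+\frac12$. First I would record the elementary bound $\frac1a+\frac1{a+b}\le\frac1{b+1}+\frac1{2b+1}\le\frac{11}{28}<\frac12$, valid because $a\ge b+1\ge 4$ and the left quantity is decreasing in both $a$ and $b$; this yields $E>\frac b2$ throughout the range.

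Next I would show that $\phi(a)-\psi(a)$ is strictly increasing in $a$. Differentiating gives $\phi'(a)=\frac1{2b}(e^{E}-1)+\frac{a+b}{2b}e^{E}E'(a)$ with $E'(a)=b\bigl(\frac1{a^2}+\frac1{(a+b)^2}\bigr)>0$, so $\phi'(a)>\frac1{2b}(e^{E}-1)$. Using $e^{E}>1+E+\frac{E^2}{2}$ together with $E>\frac b2$ gives $\phi'(a)>\frac14+\frac b{16}$. On the other hand $\psi'(a)=\frac{3}{\sqrt{9b^2+12(a-b)}}\le\frac1b$, whence $\phi'(a)-\psi'(a)>\frac14+\frac b{16}-\frac1b$, which is positive and increasing for $b\ge 3$. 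Consequently it suffices to verify the inequality at the least admissible $a$: namely $a=b+1$ when $b\ge 4$, and $a=5$ when $b=3$ (the value $a=4$ being exactly the excluded pair $(4,3)$).

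For the boundary family $a=b+1$ with $b\ge4$, I would simplify $\phi(b+1)=\frac{(2b+1)e^{E}-1}{2b}$ and use $\sqrt{9b^2+12}<3b+2$ to get $\psi(b+1)<3b+1$; the desired inequality then reduces to the clean estimate $e^{E}>3b$. Taking logarithms, it remains to prove $E>\log(3b)$, and here I would use the explicit lower bound $E=b-b\cdot\frac{3b+2}{(b+1)(2b+1)}>b-\frac32$ together with the fact that $b-\frac32-\log(3b)$ has positive derivative $1-\frac1b$ and is already positive at $b=4$. The single remaining case $(a,b)=(5,3)$ I would verify directly, reducing it to $8e^{E}>29+3\sqrt{105}$ with $E=\frac{81}{40}$ and bounding $e^{81/40}=e^2e^{1/40}>7.389\cdot1.025$.

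The main obstacle is that the inequality is genuinely delicate near the boundary: it is \emph{false} at $(4,3)$ and only barely true at $(5,3)$ and at $b=4$, so the numerical lower bounds on $e^{E}$ must be sharp enough to separate these cases, and the crude first-order estimate $e^{E}>1+E$ is too weak there. The monotonicity in $a$ is what makes the verification finite, but it in turn forces one to control $\phi'(a)$ via the second-order expansion of the exponential rather than the first-order one.
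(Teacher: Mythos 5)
Your proof is correct --- the derivative computation, the bound $E>\tfrac b2$, the reduction at $a=b+1$ to $e^{E}>3b$ via $b-\tfrac32>\log(3b)$, and the numerical estimates at $(5,3)$ and $b=4$ all check out --- and it shares the paper's overall skeleton (fix $b$, use monotonicity in $a$ to reduce to the least admissible $a$, then verify the boundary family $a=b+1$ for $b\ge4$ and the isolated case $(5,3)$), but the execution differs in a genuine way. The paper discards the square root at the outset, bounding the right-hand side by $3b+\tfrac ab$ and then showing that the surrogate $g(x,y)=\tfrac{x}{2y}\bigl(e^{E}-3\bigr)+\tfrac12e^{E}-3y+\tfrac12$ is increasing in $x$ with no calculus at all (each summand is visibly increasing once $e^{E}>3$); the price is that this surrogate is too lossy at $(5,3)$ --- there $3b+\tfrac ab\approx 10.67$ exceeds the left-hand side $\approx 9.77$, so $g(5,3)<0$ --- forcing the paper to check $(5,3)$ against the original inequality and to check $g(6,3)>0$ numerically before its monotonicity takes over for $a\ge6$. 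You instead differentiate the genuine difference $\phi-\psi$, absorbing the square-root term through $\psi'\le\tfrac1b$ and controlling $\phi'$ with the second-order bound $e^{E}>1+E+\tfrac{E^2}2$ (the first-order bound would indeed be too weak). This buys monotonicity of the actual inequality, so your single check at $(5,3)$ covers all of $b=3$, and your boundary verification rests on the clean statement $b-\tfrac32>\log(3b)$ (i.e.\ $e^{2.5}>12$ at the critical value $b=4$) rather than on decimal evaluations of $g(6,3)$ and $e^{124/45}-12-\tfrac{11}{8}$. The trade-off: your route needs calculus and a Taylor estimate where the paper's needs none, but it has fewer ad hoc numerical verifications and its numerical inputs ($e^2>7.389$, $e^{2.5}>12$, $\sqrt{105}<10.25$) are sharper and easier to certify by hand.
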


\begin{proof}
First of all, since $a>b\ge 3$ and $(a, b)\ne (4, 3)$, we have
$a\ge 5$ and
$$3b+\frac{a}{b}>\frac{3b+\sqrt{9b^2+12(a-b)}}{2}. $$
So to prove Lemma 2.5, it suffices to show that $g(a, b)>0$
with the two-variable function $g(x, y)$ being defined by
$$g(x, y):=\frac{x+y}{2y}e^{y(1-\frac{1}{x}-\frac{1}{x+y})}
-3y-\frac{3x}{2y}+\frac{1}{2}.$$

Evidently, one has
$$g(x, y)=\frac{x}{2y}\big(e^{y(1-\frac{1}{x}-\frac{1}{x+y})}-3\big)
+\frac{1}{2}e^{y(1-\frac{1}{x}-\frac{1}{x+y})}-3y+\frac{1}{2}$$
and
$e^{y(1-\frac{1}{x}-\frac{1}{x+y})}\ge e^{\frac{81}{40}}\approx 7.576110945$
when $x\ge 5$ and $y\ge 3$.
Hence $g(x, y)$ increases as $x$ increases in the interval
$[5, \infty)$ when $y\ge 3$.

Let $a=5$ and $b=3$. Then one can directly compute and find that
$$\frac{a+b}{2b}e^{b(1-\frac{1}{a}-\frac{1}{a+b})}-\frac{a}{2b}
+\frac{1}{2}\approx 9.768147927>9.7$$
and
$$\frac{3b+\sqrt{9b^2+12(a-b)}}{2}\approx 9.623475385<9.7.$$
So Lemma 2.5 is true when $a=5$ and $b=3$.

Let $a\ge 6$ and $b=3$. Then
$$g(a, b)=g(a, 3)\ge g(6, 3)\approx 1.59370755>0. $$

Finally, let $a>b\ge 4$. Then $a\ge b+1$. Noticing that $g(x, y)$
is increasing in the variable $x\in [5, \infty)$ when $y\ge 3$
and $1-\frac{1}{b+1}-\frac{1}{2b+1}\ge\frac{31}{45}$ if $b\ge 4$, we obtain that
\begin{align*}
g(a, b)
&\ge g(b+1, b) \\
&=(1+\frac{1}{2b})e^{b(1-\frac{1}{b+1}-\frac{1}{2b+1})}
-3b-\frac{3}{2b}-1 \\
&>e^{\frac{31}{45}b}-3b-\frac{11}{8} \\
&\ge e^{\frac{124}{45}}-12-\frac{11}{8} \\
&\approx 2.35477725>0,
\end{align*}
where the last second inequality holds due to the fact
$e^{\frac{31}{45}b}-3b-\frac{11}{8}$
increases as $b\in[4, \infty)$ increases.
Hence Lemma \ref{lem2.5} is proved.
\end{proof}

\begin{lem}\label{lem2.6}
Let $b>a\ge 2$ and $4a+b\ge 18$. Then
$$\frac{a+b}{2b}e^{b(1-\frac{1}{a}-\frac{1}{a+b})}-\frac{a}{2b}
+\frac{1}{2}>3b. $$
\end{lem}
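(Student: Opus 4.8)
The plan is to follow the same strategy as in Lemma \ref{lem2.5}: introduce the auxiliary two-variable function
$$g(x,y):=\frac{x+y}{2y}e^{y(1-\frac1x-\frac1{x+y})}-\frac{x}{2y}+\frac12-3y,$$
so that the assertion becomes $g(a,b)>0$ on the admissible region $b>a\ge 2$, $4a+b\ge 18$. First I would record that throughout this region the exponent is positive: since $b>a\ge 2$ forces $a+b\ge 5$, one has $1-\frac1x-\frac1{x+y}\ge 1-\frac12-\frac15>0$ for $x\ge 2$, $y\ge 3$, so $e^{y(1-\frac1x-\frac1{x+y})}\ge 1$ there.

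The first substantial step is a monotonicity statement: $g(x,y)$ is strictly increasing in $x$ for fixed $y\ge 3$ (with $x\ge 2$). The cleanest way is to rewrite
$$g(x,y)=\frac{x}{2y}\Big(e^{y(1-\frac1x-\frac1{x+y})}-1\Big)+\frac12 e^{y(1-\frac1x-\frac1{x+y})}+\frac12-3y,$$
and observe that the exponent increases in $x$, its $x$-derivative being $y\big(\frac1{x^2}+\frac1{(x+y)^2}\big)>0$. Since the exponent is nonnegative, both $\frac{x}{2y}\big(e^{(\cdots)}-1\big)$ and $\frac12 e^{(\cdots)}$ are then nondecreasing in $x$, which gives the claim.

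Using this monotonicity I would reduce the problem to the lower boundary of the region. For each fixed $b$ the smallest admissible $a$ is $a_{\min}(b)=\max\{2,\lceil(18-b)/4\rceil\}$, and a short check (using $4a+b\le 5b-4$ when $a\le b-1$) shows that $b\ge 5$ always, with $a_{\min}(5)=4$, $a_{\min}(b)=3$ for $6\le b\le 9$, and $a_{\min}(b)=2$ for $b\ge 10$. Since $g(a,b)\ge g(a_{\min}(b),b)$, it suffices to prove $g(4,5)>0$, then $g(3,b)>0$ for $b\in\{6,7,8,9\}$, and finally $g(2,b)>0$ for all $b\ge 10$. The first five are finitely many explicit evaluations, all comfortably positive.

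The only infinite family, and the main obstacle, is the ray $a=2$, $b\ge 10$, where the exponential must beat the linear term $3b$. Here I would use $\frac{a+b}{2b}\ge\frac12$ together with $-\frac{a}{2b}+\frac12=\frac12-\frac1b>0$, and then, crucially, sharpen the exponent by invoking $b\ge 10$: namely $b\big(1-\frac12-\frac1{b+2}\big)\ge b\big(\frac12-\frac1{12}\big)=\frac{5b}{12}$. This yields $g(2,b)\ge \frac12 e^{5b/12}-3b=:\psi(b)$, and since $\psi$ is convex with $\psi(10)>0$ and $\psi'(10)=\frac5{24}e^{50/12}-3>0$, it remains positive for all $b\ge 10$. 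I expect the delicate point to be exactly this coefficient bookkeeping: the crude bound $\frac{a+b}{2b}\ge\frac12$ is too lossy near the boundary (a direct attempt already fails to close, e.g., $a=3,b=6$), so it is the reduction to the lower boundary via $x$-monotonicity that makes the exponential estimate affordable.
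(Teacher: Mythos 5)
Your proposal is correct and follows essentially the same route as the paper: the same auxiliary two-variable function, the same monotonicity-in-$x$ observation after the same rewriting, and the same tail estimate $\tfrac12 e^{5b/12}-3b>0$ for $a=2$, $b\ge 10$. The only difference is organizational: where the paper simply evaluates $h(a,b)$ at all $28$ pairs $2\le a<b\le 9$ (also recording the negative ones for later use in Lemma \ref{lem2.7}), you invoke the $x$-monotonicity a second time to reduce the small-$b$ range to the five boundary evaluations $g(4,5), g(3,6), g(3,7), g(3,8), g(3,9)$ --- a mild streamlining, not a different argument.
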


\begin{proof}
First, we introduce the two-variable function $h(x, y)$
as follows:
$$h(x, y):=\frac{x+y}{2y}e^{y(1-\frac{1}{x}-\frac{1}{x+y})}
-\frac{x}{2y}+\frac{1}{2}-3y. $$
Then
$$h(x, y)=\frac{x}{2y}\big(e^{y(1-\frac{1}{x}-\frac{1}{x+y})}-1\big)
+\frac{1}{2}e^{y(1-\frac{1}{x}-\frac{1}{x+y})}
+\frac{1}{2}-3y. $$
Hence $h(x, y)$ increases as $x$
increases in the interval $[2, \infty)$ when $y>0$.
Therefore, for $b\ge 10$ and $a\ge 2$, we have
\begin{align*}
h(a, b)
&\ge h(2, b) \\
&=(\frac{1}{b}+\frac{1}{2})e^{\frac{b^2}{2(b+2)}}-3b
+\frac{1}{2}-\frac{1}{b} \\
&> \frac{1}{2}e^{\frac{b^2}{2(b+2)}}-3b \\
&=\frac{1}{2}e^{\frac{b}{2}(1-\frac{2}{b+2})}-3b \\
&\ge \frac{1}{2}e^{\frac{5}{12}b}-3b \\
&\ge \frac{1}{2}e^{\frac{25}{6}}-30 \\
&\approx 2.25004654>0.
\end{align*}
Thus Lemma 2.6 is true when $b\ge 10$.

Now let $b\le 9$. Then there are exactly $\binom {8}{2}=28$ pairs
$(a, b)$ satisfying $2\le a<b\le 9$ and we can calculate the values
of $h(a, b)$ one by one. By some computations, we find that
$h(a, b)<0$ if $(a, b)$ belongs to the following set
$$R:=\{(2, 3), (2, 4), (2, 5), (2, 6), (2, 7), (2, 8),
(2, 9), (3, 4), (3, 5)\}, \eqno(2)$$
and $h(a, b)>0$ if $(a, b)$ takes the remaining 19 pairs.
In other words, $h(a, b)<0$ if $4a+b\le 17$,
and $h(a, b)>0$ if $4a+b\ge 18$.

The proof of Lemma \ref{lem2.6} is complete.
\end{proof}

\begin{lem}\label{lem2.7}
Let $a\ge 2$, $b\ge 3$ and $a\ne b$ such that $H_{a, b}(\mathcal{S}_n)\ge 1$.
Then there is a prime $p$ such that exactly one term in $\{a+(k-1)b\}_{k=1}^{n}$
is divisible by $p$, or $p\in(\max(\frac{n}{2}, \alpha), n]$ with
$$\alpha:={\left\{\begin{array}{rl}
\frac{3b+\sqrt{9b^2+12(a-b)}}{2}, \ \ if \ \ a>b;\\
3b, \ \ if \ \ a<b.\\
\end{array}\right.} \eqno(3)$$
\end{lem}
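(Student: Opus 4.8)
The plan is to convert the hypothesis $H_{a,b}(\mathcal{S}_n)\ge 1$ into a lower bound on $n$ and then read off the required prime. First I would apply the contrapositive of Lemma \ref{lem2.4}: since $H_{a,b}(\mathcal{S}_n)\ge 1$ and $n\ge 2$, the inequality $n<1+c$ with $c:=\frac{a+b}{b}e^{b(1-\frac1a-\frac{1}{a+b})}-\frac ab$ is impossible, so $n\ge 1+c$. The pivotal observation is the algebraic identity $\frac{1+c}{2}=\frac{a+b}{2b}e^{b(1-\frac1a-\frac{1}{a+b})}-\frac{a}{2b}+\frac12$, whose right-hand side is exactly the left-hand side of both Lemma \ref{lem2.5} and Lemma \ref{lem2.6}. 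Hence the integrality-forced bound $n\ge 1+c$ feeds directly into those two lemmas.

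For the generic pairs I would split on the sign of $a-b$. If $a>b$ and $(a,b)\ne(4,3)$, then Lemma \ref{lem2.5} gives $\alpha<\frac{1+c}{2}\le\frac n2$; if $a<b$ and $4a+b\ge 18$, then Lemma \ref{lem2.6} gives $\alpha=3b<\frac{1+c}{2}\le\frac n2$. In both situations $\max(\frac n2,\alpha)=\frac n2$, so it suffices to exhibit a prime in $(\frac n2,n]$. This is immediate from Bertrand's postulate, which for every $n\ge 2$ supplies a prime $p$ with $\frac n2<p\le n$; such a $p$ satisfies the second alternative of the statement.

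It then remains to treat the finitely many exceptional pairs, namely $(4,3)$ together with the nine pairs in the set $R$ displayed in (2). For $(4,3)$ one checks that $n\ge 1+c$ already forces $n>\alpha$, so the interval $(\max(\frac n2,\alpha),n]$ is nonempty; Bertrand disposes of all large $n$, and the finitely many intermediate values are verified by hand. For the pairs in $R$ (all having $a<b$, hence $\alpha=3b$) the quantity $1+c$ may lie below $2\alpha=6b$, and when $n\le 3b$ the interval $(\alpha,n]$ is empty. There I would invoke the first alternative instead, producing a prime dividing exactly one term of the progression $a,a+b,\dots,a+(n-1)b$: a prime $p>\max(n-1,b)$ dividing the largest term $a+(n-1)b$ is coprime to $b$ and, since the $p$-divisible terms are spaced $p$ indices apart, meets no other term. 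As for each fixed $(a,b)\in R$ only the bounded range $1+c\le n\le 2\alpha$ requires attention, this is a finite verification.

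The conceptual core is the identity relating $\frac{1+c}{2}$ to the left-hand sides of Lemmas \ref{lem2.5} and \ref{lem2.6}, which upgrades the forced inequality $n\ge 1+c$ to the strict bound $\alpha<\frac n2$ and thereby reduces the generic case to a single application of Bertrand's postulate. I expect the main obstacle to be the pairs in $R$: there the interval can be empty, the second alternative is unavailable, and one must instead guarantee an isolating prime through the first alternative and a careful case-by-case analysis rather than one clean inequality.
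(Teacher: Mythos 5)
Your overall architecture coincides with the paper's: you invoke Lemma \ref{lem2.4} in contrapositive form to get $n\ge 1+c$, observe that $\frac{1+c}{2}$ is exactly the left-hand side of Lemmas \ref{lem2.5} and \ref{lem2.6}, and conclude via Bertrand's postulate that the prime in $(\frac{n}{2}, n]$ already exceeds $\alpha$ for every pair except $(4,3)$ and the nine pairs of the set $R$ in (2); those exceptional pairs are then handled by a finite analysis. This is precisely the paper's skeleton. Your treatment of $(4,3)$ is sound, if slightly more laborious than the paper's (there, the Bertrand prime exceeds $\frac{7}{6}e^{51/28}-\frac16\approx 7.04$, hence is at least $11>\alpha\approx 9.32$, so no residual hand-check of intermediate $n$ is needed).

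The genuine gap is in your mechanism for the pairs in $R$. You propose to realize the first alternative by taking ``a prime $p>\max(n-1,b)$ dividing the largest term $a+(n-1)b$''. Such a prime need not exist: for $(a,b)=(2,3)$ and $n=11$, which lies inside your range $1+c\le n\le 2\alpha$ (roughly $5\le n\le 18$), the largest term is $2+10\cdot 3=32=2^5$, whose only prime factor is $2\le\max(10,3)$; likewise for $(a,b)=(2,4)$ and $n=8$ the largest term is $30=2\cdot 3\cdot 5$ and no prime factor exceeds $\max(7,4)$. So the verification as you describe it breaks down at such $n$, even though the lemma's conclusion still holds there (e.g.\ $p=11$ isolates the term $11$ in the first example). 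The paper organizes the finite check so as to avoid this: it first eliminates $(2,9)$ and $(3,5)$ outright by computing $H_{a,b}(6b-1)<1$, which together with the hypothesis $H_{a,b}(\mathcal{S}_n)\ge 1$ forces $n\ge 6b$ for those two pairs (second alternative via Bertrand); for each of the seven surviving pairs it then sharpens the lower bound on $n$ by a computation $H_{a,b}(m)<1$ and exhibits a single prime $p$ such that $p$ or $2p$ \emph{is a term of the progression} and whose next multiple lies beyond the $n$-th term throughout the narrowed range (e.g.\ $p=17$ for $(2,3)$, $p=13$ and $p=37$ for $(2,4)$, $p=157$ for $(2,8)$). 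To repair your proof you must replace the ``large prime factor of the last term'' recipe by this kind of term-by-term choice: any prime dividing \emph{some} term whose index gap $p$ exceeds the remaining length of the progression works, but it cannot in general be found among the divisors of the largest term.
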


\begin{proof}
Since $H_{a, b}(\mathcal{S}_n)\ge 1$, by Lemma \ref{lem2.4}, one has
$$n\ge \frac{a+b}{b}e^{b(1-\frac{1}{a}-\frac{1}{a+b})}-\frac{a}{b}+1.\eqno(4)$$
On the other hand, by Bertrand's postulate, there is a prime $p\in (\frac{n}{2}, n]$.

If $a>b\ge 3$ and $(a, b)\ne (4, 3)$, then by (4) and Lemma \ref{lem2.5}, one has
$$p>\frac{n}{2}\ge \frac{a+b}{2b}e^{b(1-\frac{1}{a}-\frac{1}{a+b})}-\frac{a}{2b}
+\frac{1}{2}>\frac{3b+\sqrt{9b^2+12(a-b)}}{2}=\alpha$$
as desired. So Lemma 2.7 is true in this case.

If $(a, b)=(4, 3)$, then
$$p>\frac{n}{2}\ge \frac{a+b}{2b}e^{b(1-\frac{1}{a}-\frac{1}{a+b})}-\frac{a}{2b}
+\frac{1}{2}=\frac{7}{6}e^{\frac{51}{28}}-\frac{1}{6}
\approx 7.044128639.$$
But $p$ is a prime and $a=4, b=3$. So
$p\ge 11>9.321825380\approx \frac{3b+\sqrt{9b^2+12(a-b)}}{2}=\alpha.$
Hence Lemma 2.7 holds in this case.

If $2\le a<b$ and $4a+b\ge 18$, then by (4) and Lemma \ref{lem2.6},
$$p>\frac{n}{2}\ge \frac{a+b}{2b}e^{b(1-\frac{1}{a}-\frac{1}{a+b})}-\frac{a}{2b}
+\frac{1}{2}>3b=\alpha, $$
which means the truth of Lemma 2.7 in this case.

Now let $2\le a<b$ and $4a+b\le 17$. Then one can easily derive that the set
of all the pairs $(a, b)$ equals the set $R$ given in (2). Clearly, if $n\ge 6b$,
then $p>\frac{n}{2}\ge 3b=\alpha$, as Lemma 2.7 claimed.
In what follows, we let $n\le 6b-1$. First, we assert that
$(a, b)\not\in \{(2, 9), (3, 5)\}$. Otherwise, one has $(a, b)=(2, 9)$ or $(3, 5)$.
But $n\le 6b-1$ and a direct computation gives us that
$$H_{2, 9}(53)=$$
$$\frac{70773412390639611995377611407286048258428112436665192898048216184826913}
{72993325114428717314530010109453157943718362576137475837761483670815360} <1$$
and
$$
H_{3, 5}(29)= \frac{61763030785793910862459859011}
{62877130769344946602672156032}<1.
$$
Then it follows that for $(a, b)=(2, 9)$ or $(3, 5)$, we have
$$H_{a, b}(\mathcal{S}_n)\le H_{a, b}(n) \le H_{a, b}(6b-1)
=\sum\limits_{k=1}^{6b-1}\frac{1}{a+(k-1)b}<1.$$
This contradicts with the assumption
$H_{a, b}(\mathcal{S}_n)\ge 1$. The assertion is true.

In the following, we show that if
$$(a, b)\in R\setminus\{(2,9), (3,5)\}
=\{(2, 3), (2, 4), (2, 5), (2, 6),
(2, 7), (2, 8), (3, 4)\},$$
then there is a prime $p$ with exactly one term in
$\{a+(k-1)b\}_{k=1}^{n}$ being divisible by $p$.
This will be done in what follows.

Let $(a, b)=(2, 3)$. Then
$$H_{a, b}(\mathcal{S}_n)\le H_{a, b}(n)\le H_{2, 3}(5)
=\sum_{k=1}^5\frac{1}{3k-1}=\frac{3041}{3080}<1$$
if $n\le 5$. So we must have $n\ge 6$. But $n\le 6b-1=17$. That is, $6\le n\le 17$.
We can choose $p=17=2+5\times 3\in\{a+(k-1)b\}_{k=1}^{n}$.
So Lemma 2.7 is proved in this case.

Let $(a, b)=(2, 4)$. Then
$$H_{a, b}(\mathcal{S}_n)\le H_{a, b}(n)\le H_{2, 4}(7)
=\sum_{k=1}^7\frac{1}{4k-2}=\frac{88069}{90090}<1$$
if $n\le 7$. So $n\ge 8$. But $n\le 6b-1=23$. Namely, $8\le n\le 23$.
We pick $p=13$ for $8\le n\le 19$ since
$2p=2+6\times 4\in \{a+(k-1)b\}_{k=1}^{n}$,
and $p=37$ for $20\le n\le 23$ since
$2p=2+18\times 4\in \{a+(k-1)b\}_{k=1}^{n}$.

Let $(a, b)=(2, 5)$. Then
$$H_{a, b}(\mathcal{S}_n)\le H_{a, b}(n)\le H_{2, 5}(11)
=\sum_{k=1}^{11}\frac{1}{5k-3}=\frac{3616405543}{3652567776}<1$$
if $n\le 11$. Thus $12\le n\le 6b-1=29$. Picking $p=47=2+9\times 5$
gives us the desired result.

Let $(a, b)=(2, 6)$. Then
$$H_{a, b}(\mathcal{S}_n)\le H_{a, b}(n)\le H_{2, 6}(17)
=\sum_{k=1}^{17}\frac{1}{6k-4}=\frac{2038704876507433}{2053923842370400}<1$$
if $n\le 17$. Therefore, $18\le n\le 6b-1=35$. Choosing $p=43$ gives us
the required result since
$2p=2\times 43=2+14\times 6\in\{a+(k-1)b\}_{k=1}^{n}$.

Let $(a, b)=(2, 7)$. Then
$$H_{a, b}(\mathcal{S}_n)\le H_{a, b}(n)\le H_{2, 7}(27)
=\sum_{k=1}^{27}\frac{1}{7k-5}=\frac{1237220537370712858171751080193}
{1241931941639876926714128796800}<1$$
if $n\le 27$. So $28\le n\le 6b-1=41$. We let $p=163=2+23\times 7$
as one desires.

Let $(a, b)=(2, 8)$. Then
$$H_{a, b}(\mathcal{S}_n)\le H_{a, b}(n)\le H_{2, 8}(43)
=\sum_{k=1}^{43}\frac{1}{8k-6}$$
$$=\frac{3367642441187401373402635301280230085911262853}
{3374879226092212539809802981326899789745565750}<1$$
if $n\le 43$. This means that $44\le n\le 6b-1=47$.
Then $p=157$ leads to what we want since
$2p=2+39\times 8\in\{a+(k-1)b\}_{k=1}^{n}$.

Let $(a, b)=(3, 4)$. Then
$$H_{a, b}(\mathcal{S}_n)\le H_{a, b}(n)\le H_{3, 4}(18)
=\sum_{k=1}^{18}\frac{1}{4k-1}=\frac{17609244113383887374}
{17652709515783236895}<1$$
if $n\le 18$. That is, $19\le n\le 6b-1=23$.
At this moment, taking $p=71=3+17\times 4$, the desired result follows.

This finishes the proof of Lemma \ref{lem2.7}.
\end{proof}

\begin{lem}\label{lem2.8}
Let $a$ and $b$ be positive integers. If $p$ is a prime
and exactly one term in $\{a+(k-1)b\}_{k=1}^{n}$
is divisible by $p$, then $v_p(H_{a, b}(\mathcal{S}_n))<0$.
\end{lem}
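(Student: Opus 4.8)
The plan is to isolate the single term of the sum whose denominator is divisible by $p$ and then apply the non-archimedean (ultrametric) property of the $p$-adic valuation. By hypothesis there is a \emph{unique} index $j$ with $1\le j\le n$ such that $p\mid a+(j-1)b$, while $p\nmid a+(k-1)b$ for every $k\ne j$. Set $v:=v_p\big(a+(j-1)b\big)$, so that $v\ge 1$. The first step is to record the valuation of each individual summand: for $k\ne j$ the base $a+(k-1)b$ is coprime to $p$, hence $v_p\big((a+(k-1)b)^{s_k}\big)=0$ and therefore $v_p\big(\frac{1}{(a+(k-1)b)^{s_k}}\big)=0$; whereas for the distinguished index $j$ we have $v_p\big(\frac{1}{(a+(j-1)b)^{s_j}}\big)=-v\,s_j\le -1<0$.

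Next I would split the sum as
$$H_{a, b}(\mathcal{S}_n)=\frac{1}{(a+(j-1)b)^{s_j}}+\sum_{\substack{k=1\\ k\ne j}}^{n}\frac{1}{(a+(k-1)b)^{s_k}}.$$
Each term in the second (residual) sum has $p$-adic valuation equal to $0$, so by the general inequality $v_p(x+y)\ge\min\{v_p(x),v_p(y)\}$ the whole residual sum has valuation at least $0$. The distinguished first term has valuation $-v\,s_j$, which is strictly smaller than $0$. Since the two valuations are distinct, the ultrametric equality $v_p(x+y)=\min\{v_p(x),v_p(y)\}$ (valid precisely when $v_p(x)\ne v_p(y)$) applies, giving
$$v_p\big(H_{a, b}(\mathcal{S}_n)\big)=\min\{-v\,s_j,\ v_p(\text{residual sum})\}=-v\,s_j<0,$$
which is exactly the claimed inequality.

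There is no genuine obstacle here: the entire content is the uniqueness of the divisible term, which converts the heuristic ``one pole, no competitors'' into a rigorous statement via the strict case of the non-archimedean triangle inequality. The only point demanding a word of care is justifying that the residual sum has nonnegative valuation even in the presence of possible cancellation among its terms; this is handled cleanly by the weak inequality $v_p\big(\sum_k x_k\big)\ge\min_k v_p(x_k)$ applied to terms all of valuation $0$, and it does not require computing the residual sum explicitly. This lemma is the local $p$-adic engine that, combined with the prime-selection of Lemma \ref{lem2.7}, will drive the proof of Theorem \ref{thm1}.
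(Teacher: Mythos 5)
Your proof is correct and follows essentially the same route as the paper's: isolate the unique term whose denominator is divisible by $p$, observe that the remaining sum has nonnegative valuation, and conclude via the ultrametric equality (the paper's ``isosceles triangle principle'') since the two valuations are distinct. The only cosmetic difference is that you track the exact valuation $-v\,s_j$ while the paper settles for the bound $\le -s_{n_0}$, which makes no difference to the conclusion.
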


\begin{proof}
Let $n_0$ be an integer such that $1\le n_0\le n$ and $p|(a+(n_0-1)b)$
and $p\nmid (a+(k-1)b)$ for any integer $k$ with $k\ne n_0$ and
$1\le k\le n$. Then
$$v_p\big(\frac{1}{a+(n_0-1)b}\big)\le -1$$
and
$$v_p\Big(\sum\limits_{k=1 \atop k\ne n_0}^{n}
\frac{1}{(a+(k-1)b)^{s_k}}\Big)\ge 0. $$
It follows from the isosceles triangle principle
(see, for example, \cite{[K]}) that
$$
v_p(H_{a, b}(\mathcal{S}_n))=v_p\Big(\sum\limits_{k=1 \atop k\ne n_0}^{n}
\frac{1}{(a+(k-1)b)^{s_k}}+\frac{1}{(a+(n_0-1)b)^{s_{n_0}}}\Big)
$$
$$
=\min\Big(v_p\Big(\sum\limits_{k=1 \atop k\ne n_0}^{n}
\frac{1}{(a+(k-1)b)^{s_k}}\Big), v_p\Big(\frac{1}{(a+(n_0-1)b)^{s_{n_0}}}\Big)\Big)
$$
$$
=v_p\Big(\frac{1}{(a+(n_0-1)b)^{s_{n_0}}}\Big)\le -s_{n_0}<0
$$
as desired. Thus Lemma 2.8 is proved.
\end{proof}

\section{Proof of Theorem \ref{thm1}}

We can now prove Theorem \ref{thm1} as follows.\\

{\it Proof of Theorem \ref{thm1}.}
Obviously, $H_{a, b}(\mathcal{S}_n)>0$. So we need just to prove that
$H_{a, b}(\mathcal{S}_n)<1$ or $v_p(H_{a, b}(\mathcal{S}_n))<0$ for some prime $p$.
Let $d=\gcd(a,b)$. We divide the proof into the following four cases.

{\sc Case 1.} $a=b$.
By Bertrand's postulate, there is a prime $p\in (\frac{n}{2}, n]$,
which infers that
$p\le n<2p$.

If $a\ge p$, then $a>\frac{n}{2}$. By Lemma \ref{lem2.1},
$H_{a, b}(\mathcal{S}_n)<1$.

If $a<p$, then $\gcd(a, p)=1$. Since $p\le n<2p$, there is only one term
$ap$ divisible by $p$ in the finite arithmetic progression
$\{ak\}_{k=1}^{n}$. So by Lemma 2.8, we have $v_p(H_{a, b})<0$
as desired.

{\sc Case 2.} $a\ne b$ and $b=d$ or $2d$. Then Bertrand's postulate guarantees
the existence of a prime $p\in (\frac{a+(n-1)b}{2d}, \frac{a+(n-1)b}{d}]$.

{\sc Case 2.1.} $n<\frac{a}{b}+1$. Then $H_{a, b}(\mathcal{S}_n)<1$ by Lemma \ref{lem2.2}.

{\sc Case 2.2.} $d\ge p$. Then $d\ge p>\frac{a+(n-1)b}{2d}$. So, by Lemma
\ref{lem2.3}, $H_{a, b}(\mathcal{S}_n)<1$.

{\sc Case 2.3.} $n\ge \frac{a}{b}+1$ and $d<p$. Then $\frac{a+(n-1)b}{2d}
\ge \frac{a}{d}$ and $\gcd(d, p)=1$.

For $p\ge 3$, noticing that $\frac{b}{d}=1$ or $2$,
there is exactly one term divisible by $p$ in the finite
arithmetic progression $\{a+b(k-1)\}_{k=1}^{n}=
\{d(\frac{a}{d}+\frac{b}{d}(k-1))\}_{k=1}^{n}$
since $p\le \frac{a}{d}+\frac{b}{d}(n-1)<2p$.
Immediately, Lemma 2.8 gives us that $v_p(H_{a, b}(\mathcal{S}_n))<0$.

For $p=2$, one has $d=1$ since $d<p=2$, Since $a\ne b$ and $n\ge 2$,
we have $3\le a+b\le a+b(n-1)=\frac{a+b(n-1)}{d}<2p=4$.
Hence $a+b(n-1)=3$. This together with $n\ge \frac{a}{b}+1$
infers that $(n, a, b)=(2, 1, 2)$.
That is, $H_{a, b}(\mathcal{S}_n)=H_{1, 2}(\mathcal{S}_2)=1
+\frac{1}{3^{s_2}}$, which is obviously not an integer.

{\sc Case 3.} $a=1$ and $b\ge 3d$. Let $H_{a, b}(\mathcal{S}_n)=1+H'$,
where $H'=\sum\limits_{k=1}^{n-1}\frac{1}{(1+kb)^{s_{k+1}}}$.
Clearly, $H'>0$ for $n\ge 2$ and $H'=\frac{1}{(1+b)^{s_2}}<1$
is not an integer if $n=2$. So it is enough to prove that either
$H'<1$ or $v_p(H')<0$ for some prime $p$ when $n\ge 3$.
By Bertrand's postulate, there is a prime $p\in (\frac{n-1}{2}, n-1]$.

{\sc Case 3.1.} $n-1<\frac{1+2b}{b}e^{b(1-\frac{1}{1+b}
-\frac{1}{1+2b})}-\frac{1}{b}$. Then with (1) applied
to $a=1$, we derive that
\begin{align*}
H' &\le \sum\limits_{k=1}^{n-1}\frac{1}{1+kb} \\
&\le \frac{1}{1+b}+\frac{1}{1+2b}+\int_{2}^{n-1}\frac{1}{1+bt}dt \\
&=\frac{1}{1+b}+\frac{1}{1+2b}+\frac{1}{b}\log\frac{1+b(n-1)}{1+2b}<1
\end{align*}
as expected.

{\sc Case 3.2.} $n-1\ge \frac{1+2b}{b}e^{b(1
-\frac{1}{1+b}-\frac{1}{1+2b})}-\frac{1}{b}$.
Then
$$p>\frac{n-1}{2}\ge \frac{1+2b}{2b}e^{b(1-\frac{1}{1+b}-\frac{1}{1+2b})}
-\frac{1}{2b}. \eqno(5)$$
We claim that $p>3b$. Actually, if $b\ge 4$, then
$$b\Big(1-\frac{1}{1+b}-\frac{1}{1+2b}\Big)\ge b\Big(1-\frac{1}{1+4}
-\frac{1}{1+8}\Big)=\frac{31}{45}b\ge\frac{124}{45}.$$
But $e^x>\frac{135}{31}x$ for $x\ge\frac{124}{45}$. Therefore, by (5),
\begin{align*}
p&>\frac{1+2b}{2b}e^{\frac{31}{45}b}-\frac{1}{2b} \\
&>\frac{1+2b}{2b}\times \frac{135}{31}\times \frac{31}{45}b-\frac{1}{2b}\\
&=3b+\frac{3b-1}{2b}>3b
\end{align*}
as claimed. If $b=3$, then by (5) we have
$$p>\frac{1+2b}{2b}e^{b(1-\frac{1}{1+b}-\frac{1}{1+2b})}
-\frac{1}{2b}=\frac{7}{6}e^{\frac{51}{28}}-\frac{1}{6}\approx 7.044128639.$$
But $p$ is a prime. So $p\ge 11>9=3b$. The claim is proved.

Now from the claim, one concludes that $\gcd(p, b)=1$
and that there is exactly one term $1+k_1b$ (resp. $1+k_1b+pb$)
divisible by $p$ in $\{1+kb\}_{k=1}^{p}$ (resp. $\{1+kb\}_{k=p+1}^{2p}$),
where $1\le k_1\le p-1$. Moreover, by the claim one has
$$1+k_1b\le 1+(p-1)b<pb<\frac{1}{3}p^2 \eqno(6)$$
and
$$1+k_1b+pb<2pb<\frac{2}{3}p^2.  \eqno(7)$$
Therefore, $v_p(1+k_1b)=1=v_p(1+k_1b+pb)$.
Note that $1\le k_1\le p-1\le n-2$ since $p\le n-1$.
In the following we show that $v_p(H')<0$. Since
$p>\frac{n-1}{2}$, we have $n-1<2p<k_1+2p$. Thus there are at
most two terms divisible by $p$ in $\{1+kb\}_{k=1}^{n-1}$.

Now, if $n-1<k_1+p$, then there is exactly one term $1+k_1b$
divisible by $p$ in $\{1+kb\}_{k=1}^{n-1}=\{1+b+(k-1)b\}_{k=1}^{n-1}$.
Hence by Lemma 2.8, one has
$$v_p(H')=v_p\Big(\sum\limits_{k=1}^{n-1}
\frac{1}{(1+kb)^{s_{k+1}}}\Big)<0$$
as expected.

If $n-1\ge k_1+p$ and $s_{k_1+1}\ne s_{k_1+p+1}$, then
$$v_p(H')=\min\Big( v_p\Big(\dfrac{1}{(1+k_1b)^{s_{k_1+1}}}\Big),
v_p\Big(\dfrac{1}{(1+k_1b+pb)^{s_{k_1+p+1}}}\Big)\Big)<0$$
as desired.

If $n-1\ge k_1+p$ and $s_{k_1+1}=s_{k_1+p+1}:=s$, then let
$H'=A+B$, where
$$A:=\dfrac{1}{(1+k_1b)^s}+\dfrac{1}{(1+k_1b+pb)^s}
=\dfrac{(1+k_1b)^s+(1+k_1b+pb)^s}{(1+k_1b)^s(1+k_1b+pb)^s}$$
and
$$B:=\sum_{k=1\atop k\ne k_1, k\ne k_1+p}^{n-1} \dfrac{1}{(1+kb)^{s_{k+1}}}.$$
Evidently, one has $v_p(B)\ge 0$. On the other hand, by (6) and (7), one has
$$(1+k_1b)^s+(1+k_1b+pb)^s<\Big(\frac{1}{3}p^2\Big)^s
+\Big(\frac{2}{3}p^2\Big)^s\le p^{2s}$$
and  so $v_p((1+k_1b)^s+(1+k_1b+pb)^s)<2s$.
But $v_p((1+k_1b)^s(1+k_1b+pb)^s)=2s$. Therefore we have
$$v_p(A)=v_p((1+k_1b)^s+(1+k_1b+pb)^s)-v_p((1+k_1b)^s(1+k_1b+pb)^s)<0$$
and
$$v_p(H')=v_p(A+B)=\min(v_p(A), v_p(B))=v_p(A)<0$$
as required. So Theorem 1.1 is proved in this case.

{\sc Case 4.} $a\ne b$, $a\ge 2$ and $b\ge 3d$.
We revise the argument of Case 3. Clearly, we just need to deal with
the case when $H_{a, b}(\mathcal{S}_n)\ge 1$. In what follows,
let $H_{a, b}(\mathcal{S}_n)\ge 1$. Then by Lemma \ref{lem2.7},
there is a prime $p$ such that exactly one term in $\{a+(k-1)b\}_{k=1}^{n}$
is divisible by $p$, or $p\in(\max(\frac{n}{2}, \alpha), n]$
with $\alpha$ being given in (3).

If there is a prime $p$ such that exactly one term in
$\big\{a+(k-1)b\big\}_{k=1}^{n}$ is divisible by $p$,
then by Lemma 2.8, we have $v_p(H_{a, b}(\mathcal{S}_n))<0.$
If it doesn't hold, then by Lemma 2.7,
there is a prime $p\in(\max(\frac{n}{2}, \alpha), n]$.
Since $p>\alpha\ge 3b$, we have $\gcd(p,b)=1$
and in $\{a+kb\}_{k=0}^{p-1}$ (resp. $\{a+kb\}_{k=p}^{2p-1}$),
there is exactly one term $a+k_2b$ (resp. $a+k_2b+pb$)
divisible by $p$, where $0\le k_2\le p-1$. But $p-1\le n-1<2p-1$,
it implies that there are exactly two terms
$a+k_2b$ and $a+k_2b+pb$ in $\{a+(k-1)b\}_{k=1}^{n}$ divisible
by $p$. Since $0\le k_2\le p-1$,
we deduce that $a+k_2b\le pb+a-b$. If $a>b$, then
$p>\alpha=\frac{3b+\sqrt{9b^2+12(a-b)}}{2}$ from which
one derives that
$$pb+a-b<\frac{1}{3}p^2.  \eqno(8)$$
If $a<b$, then $a-b<0$ and $p>\alpha =3b$,
and so (8) still holds when $a<b$. Hence
$$p\le a+k_2b\le pb+a-b<\frac{1}{3}p^2. \eqno(9)$$
This infers that $v_p(a+k_2b)=1$. But $p>\alpha\ge 3b$.
Thus $pb<\frac{1}{3}p^2$ and by (9),
$$p\le a+k_2b+pb<\frac{2}{3}p^2.  $$
It follows that $v_p(a+k_2b+pb)=1$.

First, let $s_{k_2+1}\ne s_{k_2+p+1}$. Then
$$v_p(H_{a, b}(\mathcal{S}_n))=
\min\Big( v_p\Big(\dfrac{1}{(a+k_2b)^{s_{k_2+1}}}\Big),
v_p\Big(\dfrac{1}{(a+k_2b+pb)^{s_{k_2+p+1}}}\Big)\Big)$$
$$=\min(-s_{k_2+1},-s_{k_2+p+1})<0$$
as required.

Now let $s_{k_2+1}=s_{k_2+p+1}:=\bar s$. Then we split
$H_{a, b}(\mathcal{S}_n)$ into two parts: $H_{a, b}(\mathcal{S}_n)=C+D$,
where
$$C:=\dfrac{1}{(a+k_2b)^{\bar s}}+\dfrac{1}{(a+k_2b+pb)^{\bar s}}
=\dfrac{(a+k_2b)^{\bar s}+(a+k_2b+pb)^{\bar s}}{(a+k_2b)^{\bar s}(a+k_2b+pb)^{\bar s}}$$
and
$$D:=\sum_{k=1\atop k\ne k_2+1, k\ne k_2+p+1}^{n} \dfrac{1}{(a+(k-1)b)^{s_k}}.$$
However, from (8) and (9), one deduces that
$$0<(a+k_2b)^{\bar s}+(a+k_2b+pb)^{\bar s}<\Big(\frac{1}{3}p^2\Big)^{\bar s}
+\Big(\frac{2}{3}p^2\Big)^{\bar s}\le p^{2\bar s}.$$
So $v_p((a+k_2b)^{\bar s}+(a+k_2b+pb)^{\bar s})<2\bar s$.
But $v_p((a+k_2b)^{\bar s}(a+k_2b+pb)^{\bar s})=2\bar s$. Thus
$$v_p(C)=v_p((a+k_2b)^{\bar s}+(a+k_2b+pb)^{\bar s})
-v_p((a+k_2b)^{\bar s}(a+k_2b+pb)^{\bar s})<0.$$
Since $v_p(D)\ge 0$, we deduce immediately that
$$v_p(H_{a, b}(\mathcal{S}_n))=v_p(C+D)=v_p(C)<0$$
as one desires.

This concludes the proof of Theorem \ref{thm1}.
\hfill$\Box$

\section{Final remarks}
Let $a, b$ and $n$ be positive integers. For any integer $k$ with
$1\le k\le n$, $H_{a, b}^{(k)}(\mathcal{S}_n)$ stands for the
$k$-th elementary symmetric function of the $n$ fractions:
$\frac{1}{a^{s_1}}, \frac{1}{(a+b)^{s_2}}, ..., \frac{1}{(a+(n-1)b)^{s_n}}$.
Namely,
$$
H_{a, b}^{(k)}(\mathcal{S}_n):=\sum_{1\le i_1<...<i_k\le n}
\prod_{j=1}^{k}\frac{1}{(a+b(i_j-1))^{s_{i_j}}}.
$$
Then $H_{a, b}^{(1)}(\mathcal{S}_n)=H_{a, b}(\mathcal{S}_n)$.
Hong and Wang \cite{[HW]} showed that if all $s_i$ are equal to 1,
then all $H_{a, b}^{(k)}(\mathcal{S}_n)$ are not integers if $n\ge 4$.
We believe that such result is still true for any infinite sequence
$\mathcal{S}$ of positive integers. Namely, we propose the
following conjecture.\\
\\
{\bf Conjecture 4.1.} {\it For any infinite sequence $\mathcal{S}$
of positive integers and arbitrary positive integers $a, b$ and $n$,
if $n\ge 4$, then none of
$H_{a, b}^{(1)}(\mathcal{S}_n), H_{a, b}^{(2)}(\mathcal{S}_n),
..., H_{a, b}^{(n)}(\mathcal{S}_n)$ is an integer.}\\

By Theorem 1.1, one knows that Conjecture 4.1 is true when
$k=1$. It is clear that Conjecture 4.1 holds when $k=n$.
Hence we need just to look at the case $2\le k\le n-1$.
On the other hand, if all $s_i$ are equal to 1, then Hong and
Wang's result \cite{[HW]} says that Conjecture 4.1 is true.
If all $s_i$ are greater than 2, then one can show the truth
of Conjecture 4.1. However, if there exist indexes $i$ and $j$
such that $s_i=1$ and $s_j\ge 2$, then the situation becomes
complicated and hard, and so the truth of Conjecture 4.1
is still kept open so far.\\

\begin{center}
{\sc Acknowledgement}
\end{center}
The authors would like to thank the anonymous referee for
careful reading of the manuscript and helpful comments.

\bibliographystyle{amsplain}

\begin{thebibliography}{10}
\bibitem{[CT]} Y.G. Chen and M. Tang, On the elementary symmetric functions of
$1, 1/2, ...,$ $1/n$, {\it Amer. Math. Monthly}, {\bf 119} (2012), 862-867.
\bibitem{[EN]} P. Erd\H{o}s and I. Niven, Some properties of partial sums of
the harmonic series, {\it Bull. Amer. Math Soc.}, {\bf 52} (1946), 248-251.
\bibitem{[HW]} S.F. Hong and C.L. Wang, The elementary symmetric functions
of reciprocals of the elements of arithmetic progressions,
{\it Acta Math. Hungari.}, {\bf 144} (2014), 196-211.
\bibitem{[K]} N. Koblitz, {\it $p$-Adic numbers, $p$-adic analysis
and zeta-functions}, 2nd ed., Springer-Verlag (New York, 1984).
\bibitem{[LHQW]} Y.Y. Luo, S.F. Hong, G.Y. Qian and C.L. Wang,
The elementary symmetric functions of a reciprocal polynomial sequence,
{\it C.R. Acad. Sci. Paris, Ser. I}, {\bf 352} (2014), 269-272.
\bibitem{[N]} T. Nagell, Eine Eigenschaft gewissen Summen,
{\it Skr. Norske Vid. Akad. Kristiania}, {\bf 13} (1923), 10-15.
\bibitem{[T]} L. Theisinger, Bemerkung \"uber die harmonische Reihe,
{\it Monatsh. Math. Phys.}, {\bf 26} (1915), 132-134.
\bibitem{[WH]} C.L. Wang and S.F. Hong, On the integrality of the elementary
symmetric functions of $1, 1/3, ...,$ $1/(2n-1)$, {\it Math. Slovaca},
{\bf 65} (2015), 957-962.
\bibitem{[YLFJ]} W.X. Yang, M. Li, Y.L. Feng and X. Jiang,
On the integrality of the first and second elementary symmetric functions of
$1,1/2^{s_2}  ,..., 1/n^{s_n}$, {\it AIMS Math.}, {\bf 2} (2017), 682-691.
\bibitem{[YHYQ]} Q.Y. Yin, S.F. Hong, L.P. Yang and M. Qiu,
Multiple reciprocal sums and multiple reciprocal star sums of
polynomials are almost never integers, {\it J. Number Theory},
https://doi.org/10.1016/j.jnt.2018.06.005, published online July 17, 2018.
\end{thebibliography}

\end{document}